\theoremstyle{plain}
\newtheorem{theorem}{Theorem}[section]
\newtheorem{lemma}[theorem]{Lemma}
\theoremstyle{definition}
\newtheorem{remark}[theorem]{Remark}
\DeclareMathOperator{\ch}{ch}
\DeclareMathOperator{\cl}{cl}
\DeclareMathOperator{\id}{id}
\DeclareMathOperator{\td}{td}
\DeclareMathOperator{\even}{even}
\DeclareMathOperator{\odd}{odd}
\DeclareMathOperator{\Tr}{Tr}
\def\tH{\widetilde{H}}
\def\a{\alpha}
\def\F{\mathbb{F}}
\def\O{\mathcal{O}}
\def\Ql{\mathbb{Q}_{\ell}}
\begin{document}
\author{Katrina Honigs}
\title{Derived equivalent surfaces and abelian varieties, and their zeta functions}

\maketitle

\begin{abstract}
In this paper, it is demonstrated that 
derived equivalence between smooth, projective varieties that are either surfaces or abelian implies equality of zeta functions.
\end{abstract}

Since their definition in the 1960's by Verdier and Grothendieck \cite{verdier} to provide a foundation for homological algebra, derived categories have been regarded with progressively more interest. In particular, the bounded derived category $D^b(X)$ of coherent sheaves on a variety $X$ has interest in its own right as an invariant, which is the subject of this paper.

Bondal and Orlov \cite[Theorem 2.5]{ample} demonstrated that if there is an exact equivalence $D^b(X)\simeq D^b(Y)$ between 
smooth varieties $X$ and $Y$, and $X$ is projective and has ample or anti-ample canonical bundle,
then $X$ is isomorphic to $Y$. 
However, if we 
remove the assumption on the canonical bundle, 
the result no longer holds in general: an abelian variety is always derived equivalent to its dual (via, for instance, the Fourier--Mukai transform with kernel the Poincar\'{e} bundle, as shown in Theorem 2.2 of Mukai's influential work \cite{mukai}), but not always isomorphic to it. 
However, an abelian variety and its dual are isogenous, and furthermore, 
a conjecture of Orlov (\cite[Conjecture 1]{motives}) states that derived equivalent smooth projective varieties have isomorphic motives. This conjecture predicts that derived equivalent smooth projective varieties over finite fields should have equal zeta functions, and in this paper we show this prediction holds for derived equivalent varieties that are either surfaces or abelian.

We will prove in Section \ref{abelian} that any abelian varieties over finite fields that are derived equivalent are also isogenous, or equivalently, by Tate \cite[Theorem 1]{tate}, have equal zeta functions (Theorem \ref{abelian.thm}).
We will show that derived equivalent surfaces have equal zeta functions (Theorem \ref{thmsurfaces})
via a generalization of the argument that Olsson and Lieblich used in 
\cite{LO} for K3 surfaces.
We first develop some necessary background for this proof in Section \ref{mhs}, where we define, for any Weil cohomology, the even and odd Mukai--Hodge structures of an arbitrary smooth, projective variety $X$ and show that these structures are invariant under Fourier--Mukai equivalence. Then we give the proof of Theorem \ref{thmsurfaces} in Section \ref{sec.pf}.

\section{Terminology and preliminaries}


We denote the bounded derived category of coherent sheaves on a variety $X$ by $D^b(X)$.
Given two varieties $X$ and $Y$ defined over a field $k$, 
a \textit{derived equivalence} between them is a $k$-linear exact equivalence  $D^b(X)\simeq D^b(Y)$ (see \cite[Definition 3.2]{huybrechts}). 

A functor $F$ between derived categories $D^b(X)$ and $D^b(Y)$ is a \textit{Fourier--Mukai transform} if there exists an object $P \in D^b(X\times Y)$, called a \textit{Fourier--Mukai kernel}, such that $F={p_2}_*(p_1^*(-)\otimes P)$, where pushforward, pullback, and tensor are all in their derived versions. The Fourier--Mukai transform with kernel $P$ is denoted $\Phi_P$.
By a result of Orlov \cite[Theorem 3.2.1]{Ocoh}, given a derived equivalence $F: D^b(X) \to D^b(Y)$ between two smooth, projective varieties $X$ and $Y$, there is an object $P\in D^b(X\times Y)$ unique up to isomorphism such that $\Phi_P$ is naturally isomorphic to $F$ (see Section 5.1 and specifically Corollary 5.17 in \cite{huybrechts}).
Thus, in proving Theorem \ref{thmsurfaces}, we may immediately replace ``derived equivalent'' with ``Fourier--Mukai equivalent''.

\section{Derived equivalent abelian varieties}\label{abelian}

We first prove a result that holds in arbitrary characteristic.

\begin{lemma}\label{isogeny}
Let $A$ and $B$ be abelian varieties defined over an arbitrary field. If $A$ and $B$ are derived equivalent, then $A$ and $B$ are isogenous.
\end{lemma}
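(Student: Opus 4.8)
The plan is to reduce the lemma to a purely isogeny-theoretic cancellation by invoking Orlov's structural description of Fourier--Mukai equivalences between abelian varieties. As recorded in Section~1, the given $k$-linear equivalence $D^b(A)\simeq D^b(B)$ is represented by a Fourier--Mukai kernel, so I may assume it has the form $\Phi_P$. Orlov's classification attaches to such an equivalence an isomorphism of abelian varieties $f\colon A\times\hat A \to B\times\hat B$ that is isometric for the natural pairing on each side; building on Mukai's theory of the Fourier transform \cite{mukai}, the essential consequence I need is simply that the existence of a derived equivalence forces an isomorphism $A\times\hat A\cong B\times\hat B$ over $k$.

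Granting this, the remainder is elementary and characteristic-independent. Any ample line bundle on $A$ gives a polarization, i.e.\ an isogeny $A\sim\hat A$, and likewise $B\sim\hat B$, so the isomorphism above yields an isogeny $A^2\sim B^2$. It then remains to cancel the square. For this I would pass to the category of abelian varieties up to isogeny over $k$, which is semisimple by Poincar\'e's complete reducibility theorem, valid over any field. In this category every object decomposes as $\prod_i A_i^{n_i}$ for pairwise non-isogenous simple abelian varieties $A_i$, with the factors $A_i$ and their multiplicities $n_i$ determined uniquely. Writing $A\sim\prod_i A_i^{n_i}$ and $B\sim\prod_i A_i^{m_i}$ over a common set of simple factors, the isogeny $A^2\sim B^2$ gives $2n_i=2m_i$, hence $n_i=m_i$ for every $i$, and therefore $A\sim B$, as desired.

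The main obstacle is the first step: checking that Orlov's construction of the isometric isomorphism $A\times\hat A\cong B\times\hat B$ is available over an \emph{arbitrary} field, in particular in positive characteristic, rather than only over $\mathbb{C}$. One must verify that the ingredients — the Fourier transform on abelian varieties and the representability of the induced self-product isomorphism — go through without any hypothesis on the characteristic, and, crucially, that the resulting isomorphism is genuinely defined over the base field $k$ and not merely over $\bar k$. This last point is what permits me to run the semisimplicity argument over $k$ itself and so sidestep any delicate descent question for isogenies. Everything downstream of Orlov's isomorphism, by contrast, is formal.
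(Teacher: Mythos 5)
Your proposal is correct and follows essentially the same route as the paper: both rest on Orlov's theorem that a derived equivalence forces $A\times\hat{A}\cong B\times\hat{B}$, then combine the isogeny $A\sim\hat{A}$ (and $B\sim\hat{B}$) with the uniqueness of the decomposition into simple factors, up to isogeny, given by Poincar\'e complete reducibility to cancel and conclude $A\sim B$. Your write-up merely makes explicit the multiplicity-counting step ($2n_i=2m_i$) that the paper leaves implicit, and your caution about Orlov's result over an arbitrary base field is a reasonable scruple, though the paper likewise just cites Orlov directly.
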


\begin{proof}
By Orlov \cite[Theorem 2.19]{abelian}, if $A$ and $B$ are derived equivalent abelian varieties, then $A\times \hat{A} \cong B\times\hat{B}$. 

By Poincar\'{e}'s complete reducibility theorem, abelian varieties decompose uniquely, up to isogeny, into products of simple abelian varieties (see Corollary 1, page 174 of \cite{mumford}). Then, since any abelian variety is isogenous to its dual, $A\times \hat{A} \cong B\times\hat{B}$ implies that $A$ is isogenous to $B$.
\end{proof}

\begin{theorem}\label{abelian.thm}
Let $A$ and $B$ be abelian varieties defined over a finite field $\F$. If $A$ and $B$ are derived equivalent, then $A$ and $B$ have equal zeta functions.
\end{theorem}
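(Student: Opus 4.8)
The plan is to reduce the statement about zeta functions to the already-established isogeny result. By Lemma \ref{isogeny}, derived equivalence of $A$ and $B$ implies that $A$ and $B$ are isogenous. Since both are now assumed to be defined over the finite field $\F$, it suffices to show that isogenous abelian varieties over a finite field have equal zeta functions. This is exactly the direction of Tate's theorem cited in the introduction (\cite[Theorem 1]{tate}), which establishes the equivalence, for abelian varieties over a finite field, between being isogenous and having equal zeta functions. So the core of the argument is a short two-step chain: derived equivalent $\Rightarrow$ isogenous $\Rightarrow$ equal zeta functions.

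For completeness I would spell out why isogeny forces equality of zeta functions, since this is the substantive content being invoked. Recall that the zeta function of an abelian variety $A/\F$ (with $\F$ of cardinality $q$) is determined by the action of the $q$-power Frobenius endomorphism on a Weil cohomology theory, say $\ell$-adic cohomology $H^*(A_{\bar\F},\Ql)$ for a prime $\ell$ distinct from the characteristic. Concretely, $H^*(A_{\bar\F},\Ql) \cong \bigwedge^* H^1(A_{\bar\F},\Ql)$, so the entire zeta function is governed by the characteristic polynomial of Frobenius acting on $H^1$, equivalently on the Tate module $T_\ell A$. The key point is that an isogeny $A \to B$ induces a Frobenius-equivariant isomorphism on the rational Tate modules $V_\ell A \cong V_\ell B$, hence the characteristic polynomials of Frobenius on $H^1$ of $A$ and $B$ agree. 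Since the zeta function is built functorially from these Frobenius characteristic polynomials via the exterior powers, equal $H^1$-data yields equal zeta functions.

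I would therefore structure the proof as: first invoke Lemma \ref{isogeny} to obtain an isogeny between $A$ and $B$; then observe that an isogeny induces a Frobenius-equivariant isomorphism on rational Tate modules, so the Frobenius characteristic polynomials on $H^1$ coincide; and finally conclude that the zeta functions, which are determined by these polynomials, are equal. Alternatively, one may simply cite Tate's theorem directly, which packages the equivalence "isogenous $\iff$ equal zeta function" and makes the deduction immediate.

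I do not expect any serious obstacle here, since all the heavy lifting has already been done: the geometric input (Orlov's theorem forcing $A \times \hat A \cong B \times \hat B$, and Poincaré reducibility) is absorbed into Lemma \ref{isogeny}, and the arithmetic input is Tate's theorem. The only point requiring mild care is ensuring that the field of definition is genuinely finite when invoking Tate — Lemma \ref{isogeny} holds over an arbitrary field, but the passage from isogeny to equal zeta functions is specific to the finite-field setting, which is exactly the hypothesis of Theorem \ref{abelian.thm}. Thus the proof is essentially a one-line citation once the lemma is in hand.
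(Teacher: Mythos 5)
Your proposal is correct and follows exactly the paper's argument: apply Lemma \ref{isogeny} to get an isogeny, then cite Tate's theorem \cite[Theorem 1]{tate} to conclude equality of zeta functions. The extra paragraph unpacking why isogenies preserve zeta functions (via Frobenius-equivariant isomorphisms of rational Tate modules) is a fine elaboration but not needed, since the paper, like your final remark, treats this as a direct citation.
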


\begin{proof}
Let $A$ and $B$ be derived equivalent abelian varieties over a finite field $\F$.
By Lemma \ref{isogeny}, $A$ and $B$ are isogenous.
Isogenous abelian varieties over a finite field have equal zeta functions (see Tate \cite[Theorem 1]{tate}, for example, for a proof). Hence, $A$ and $B$ have equal zeta functions.
\end{proof}

\section{Mukai--Hodge structures}\label{mhs}

Let $H$ be an arbitrary Weil cohomology with coefficients in a characteristic 0 field $K$. A classic reference on Weil cohomologies is Kleiman's article \cite{kleiman};  however, Kleiman's definition of a Weil cohomology does not include Tate twists, which we need to take into account. 
So, we define a Weil cohomology theory to be a Poincar\'e duality theory with supports, as defined in Bloch and Ogus  \cite{blochogus}. We will denote the $i^{\rm th}$ cohomology group of $X$ twisted by $n$ as $H^i(X)(n)$.
In this paper, we take the \textit{even and odd Mukai--Hodge structures} of a dimension $d_X$ smooth, projective variety $X$ to be the pure-weight (weights 0 and 1, respectively) sums of cohomology groups given by:
\begin{align*}
\tH^{\even}(X/K)&=\bigoplus^{d_X}_{i=0} H^{2i}(X/K)(i), \\
\tH^{\odd}(X/K)&=\bigoplus^{d_X}_{i=1} H^{2i-1}(X/K)(i).
\end{align*}

Given varieties $X$ and $Y$ (of dimensions $d_X$ and $d_Y$), a Fourier--Mukai transform $\Phi_P: D^b(X) \to D^b(Y)$ with kernel $P\in D^b(X\times Y)$ induces 
the operation 
$\Psi_P^{CH}=p_{2*}(v(P)\cup p_1^*(-))$
on Chow groups where $v(P):= \ch (P).\sqrt{\td(X\times Y)}$ is the Mukai vector of $P$. 
Then we can take the cycle class of $v(P)$ inside any Weil cohomology theory of our choice to induce a map $\Psi_P=p_{2*}(\cl(v(P))\cup p_1^*(-))$ on cohomology.
More specifically, we define
\[\displaylines{
\Psi_P^{i,j}: H^i(X/K) \xrightarrow{p_1^*} H^i(X\times Y/K) \xrightarrow{\cup v^{j}(P)} H^{i+2j}(X\times Y /K)(j)
\hfill\cr\hfill
\xrightarrow{p_{2*}} H^{i+2(j-d_X)}(Y/K)(j-d_X),
}\]
for $0\leq i\leq 2d_X$ and $0\leq j\leq 2d_Y$, where $v^j(P)$ is the degree $2j$ part of $v(P)$. We denote the map $\Psi^{i,j}_P$ induces on the $i^{\rm th}$ cohomology twisted by $l$ as $\Psi^{i,j}_P(l): H^i(X/K)(l) \to H^{i+2j-2d_X}(Y/K)(l+j-d_X)$.

Observe that the maps
\begin{align*}
\Psi_P^{\even}:=\bigoplus_{i=0}^{d_X} \sum_{j=0}^{d_X+d_Y} \Psi^{2i,j}_P(i-d): 
\tH^{\even}(X/K) &\to \tH^{\even}(Y/K), \\
\Psi_P^{\odd}:=\bigoplus_{i=1}^{d_X} \sum_{j=0}^{d_X+d_Y} \Psi^{2i-1,j}_P(i-d): 
\tH^{\odd}(X/K) &\to \tH^{\odd}(Y/K),
\end{align*}
are well defined. 

\begin{lemma}\label{lemma}
Given smooth, projective varieties $X$ and $Y$ and a Fourier--Mukai equivalence $\Phi_P: D^b(X)\to D^b(Y)$, the maps $\Psi_P^{\even}$ and $\Psi_P^{\odd}$ are isomorphisms.
\end{lemma}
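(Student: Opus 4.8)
The plan is to show that the assignment $P \mapsto \Psi_P$ is functorial with respect to composition of Fourier--Mukai kernels, and that the kernel $\O_\Delta$ of the identity functor induces the identity on cohomology; the lemma then follows formally. Since $X$ and $Y$ are smooth and projective and $\Phi_P$ is an equivalence, Orlov's theorem gives a quasi-inverse that is again a Fourier--Mukai transform $\Phi_Q$ with kernel $Q \in D^b(Y \times X)$, and the kernels satisfy $Q \circ P \cong \O_{\Delta_X}$ and $P \circ Q \cong \O_{\Delta_Y}$, where $\circ$ denotes convolution of kernels. Granting functoriality, this yields $\Psi_Q \circ \Psi_P = \Psi_{\O_{\Delta_X}} = \id$ and $\Psi_P \circ \Psi_Q = \id$, so $\Psi_P$ is an isomorphism on total cohomology with inverse $\Psi_Q$.

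First I would establish the composition law at the level of cohomological correspondences: for classes $\a \in H^*(X \times Y)$ and $\b \in H^*(Y \times Z)$, the correspondence actions compose as $\Psi_{\b} \circ \Psi_{\a} = \Psi_{\b \circ \a}$, where $\b \circ \a = p_{XZ*}(p_{XY}^* \a \cup p_{YZ}^* \b)$. This is the standard manipulation of correspondences and uses only the projection formula and proper base change, which are part of the axioms of a Poincar\'e duality theory with supports. Applying this with $\a = \cl(v(P))$ and $\b = \cl(v(Q))$ reduces the functoriality claim to the single identity $\cl(v(Q)) \circ \cl(v(P)) = \cl(v(Q \circ P))$.

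The main obstacle is this last identity, which is the Grothendieck--Riemann--Roch computation underlying the Mukai-vector formalism. I would prove the corresponding statement $v(Q) \circ v(P) = v(Q \circ P)$ in the Chow ring, where the composition on the left likewise uses convolution of cycles, and then transport it to cohomology via the cycle class map $\cl$, which is a ring homomorphism compatible with pullback and proper pushforward. At the Chow level the computation is the classical one: GRR relates $\ch$ of the convolution $Q \circ P = p_{XZ*}(p_{XY}^*P \otimes p_{YZ}^*Q)$ to the pushforward of the product of Chern characters times the Todd class of the middle projection, and the normalization $v(-) = \ch(-) \cup \sqrt{\td}$ is designed precisely so that the excess Todd contribution from the middle factor $Y$ is split symmetrically as $\sqrt{\td(Y)}$ on each side and cancels. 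The same normalization handles the identity kernel: GRR for the diagonal embedding gives $\ch(\O_\Delta) = \Delta_*(\td(T_X)^{-1})$, and since $\sqrt{\td(X \times X)}$ restricts along the diagonal to $\td(T_X)$, one obtains $v(\O_\Delta) = \Delta_*(1) = [\Delta_X]$, whose associated correspondence is the identity.

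Finally I would record that $\cl(v(P))$ is supported in even cohomological degrees, being the cycle class of an algebraic class, so cupping with its graded pieces shifts cohomological degree by even amounts; consequently $\Psi_P$ preserves the parity of cohomological degree and splits as the direct sum of $\Psi_P^{\even}$ and $\Psi_P^{\odd}$. The twists in the definitions are arranged so that these are morphisms of the weight-$0$ and weight-$1$ Mukai--Hodge structures respectively. Since the total map $\Psi_P$ is an isomorphism with inverse $\Psi_Q$, and the inverse likewise respects parity, each of $\Psi_P^{\even}$ and $\Psi_P^{\odd}$ is an isomorphism.
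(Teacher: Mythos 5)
Your proposal is correct and follows essentially the same strategy as the paper's proof: obtain a quasi-inverse Fourier--Mukai kernel, show that convolution of kernels corresponds to composition of the induced cohomological correspondences (reducing via Grothendieck--Riemann--Roch and the $\sqrt{\td}$ normalization to the identity $v(Q)\circ v(P)=v(Q\circ P)$), and show that $\O_{\Delta}$ induces the identity, from which the lemma follows formally. The only distinction is that you spell out the GRR computations (including $v(\O_{\Delta})=[\Delta]$) that the paper delegates to citations of Mukai and Huybrechts, and you make explicit the parity-preservation remark that the paper leaves implicit in its definitions of $\Psi_P^{\even}$ and $\Psi_P^{\odd}$.
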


\begin{proof}
Since Fourier--Mukai transforms between smooth, projective varieties have left and right adjoints that are also Fourier--Mukai transforms (see \cite[Propostion 5.9]{huybrechts}) there is a $P'\in D^b(X\times Y)$ such that $\Phi_{P'}$ is quasi-inverse to $\Phi_P$.
Since the compositions $\Phi_{P'}\circ\Phi_P\cong \id_{D^b(Y)}$ and $\Phi_P\circ \Phi_{P'}\cong\id_{D^b(X)}$ are fully faithful and exact, 
$\O_{\Delta}\in D^b(X\times X)$ and $\O_{\Delta}\in D^b(Y\times Y)$ are the unique (up to isomorphism) objects such that $\Phi_{P'}\circ\Phi_P\cong \Phi_{\O_{\Delta}}$ and 
$\Phi_P\circ \Phi_{P'}\cong\Phi_{\O_{\Delta}}$ (Orlov \cite{Ok3,Ocoh}; see also \cite[Theorem 5.14]{huybrechts}).

Hence, in order to show that $\Psi_P^{\even}$ and $\Psi_P^{\odd}$ are isomorphisms, it suffices to prove the following two statements.
\begin{enumerate}
\item[(1)] For $Q\in D^b(X\times Y)$, $R\in D^b(Y\times Z)$, $S\in D^b(X\times Z)$ such that $\Phi_R$, $\Phi_Q$ and $\Phi_S$ are equivalences and $\Phi_R\circ\Phi_Q\cong \Phi_S$, we have $\Psi^{\even}_R\circ\Psi^{\even}_Q\cong \Psi^{\even}_S$ and $\Psi^{\odd}_R\circ\Psi^{\odd}_Q\cong \Psi^{\odd}_S$.
\item[(2)] $\Psi^{\even}_{\O_{\Delta}}$ and $\Psi^{\odd}_{\O_{\Delta}}$ act identically.
\end{enumerate}

(1)\ By Mukai \cite[Proposition 1.3]{mukai}, $\Phi_R\circ\Phi_Q\cong\Phi_{S'}$ for $S'={\pi_{XZ}}_*({\pi_{XY}}_* R \otimes {\pi_{YZ}}_* Q)$, where $\pi_{XY}$, $\pi_{YZ}$, and $\pi_{XZ}$ are the projection maps from $X\times Y\times Z$ to $X\times Y$, $Y\times Z$, and $X\times Z$. Since $\Phi_{S'}$ and $\Phi_{S}$ are equivalences by Orlov \cite[Theorem 2.2]{Ok3}, $S\cong S'$, and, without loss of generality, we may let $S=S'$. 

Mukai's argument shows directly that $\Phi_R\circ\Phi_Q$ and $\Phi_S$ have isomorphic kernels (and so are naturally isomorphic derived functors) using the projection formula and the flat base change theorem. The same arguments can be applied inside the Chow ring to show that  $\Psi_R^{CH}\circ\Psi_Q^{CH}=\Psi_S^{CH}$, and they still apply once we descend to a Weil cohomology theory of our choice by taking cycle classes. We note that Huybrechts' proof of a result analogous to \cite[Proposition 1.3]{mukai} for realizations of the Fourier--Mukai functor acting on the cohomology $H^*(X,\mathbb{Q})$ of the constant sheaf $\mathbb{Q}$ on complex manifolds $X$ \cite[Lemma 5.32]{huybrechts} rests on this same argument -- showing Mukai's proof still works after descending to this particular cohomology theory.

(2)\ As shown in the proof of \cite[Proposition 5.33]{huybrechts}, as a direct consequence of the Grothendieck--Riemann--Roch formula, for any smooth, projective variety $X$, the operation $p_{1*}(p_2^*(-) \cup v(\O_{\Delta}))$ acts identically on cohomology groups. Since $\Psi^{\even}_{\O_{\Delta}}$ and $\Psi^{\odd}_{\O_{\Delta}}$ are given by the action of $p_{1*}(p_2^*(-) \cup v(\O_{\Delta})$ on $\tH^{\even}(X/K)$ and $\tH^{\odd}(Y/K)$, respectively, they each act identically.
\end{proof}

\begin{remark}
The choice of weight for the even and odd Mukai--Hodge structures was an arbitrary one. We could alter either structure by twisting it by the same amount in each dimension. The maps $\Psi$, also twisted by that same amount, would still induce isomorphisms on the even and odd Mukai--Hodge structures of Fourier--Mukai equivalent varieties.
\end{remark}

\section{Derived equivalent surfaces}\label{sec.pf}

\begin{theorem}\label{thmsurfaces}
Let $X$ and $Y$ be surfaces (i.e., smooth, projective varieties of dimension 2) over a finite field $\F$ 
such that $D^b(X)$ is equivalent to $D^b(Y)$. Then $X$ and $Y$ have the same zeta-function. In particular, $\#X(\F)=\#Y(\F)$.
\end{theorem}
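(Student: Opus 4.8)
The plan is to take for $H$ the $\ell$-adic \'etale cohomology $H^i(-/\Ql)=H^i_{\mathrm{et}}((-)_{\overline{\F}},\Ql)$ for a fixed prime $\ell\neq\mathrm{char}(\F)$, so that the geometric Frobenius $\phi$ acts on every group appearing in Section \ref{mhs}. By the Grothendieck--Lefschetz trace formula the zeta function factors as $Z(X,t)=\prod_{i=0}^{4}P_i^X(t)^{(-1)^{i+1}}$ with $P_i^X(t)=\det(1-\phi t\mid H^i(X/\Ql))$, so it suffices to prove $P_i^X=P_i^Y$ for every $i$; the count $\#X(\F)=\sum_i(-1)^i\Tr(\phi\mid H^i(X/\Ql))$ then follows. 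Having replaced the derived equivalence by a Fourier--Mukai equivalence $\Phi_P$, I would first isolate the arithmetic input absent from Lemma \ref{lemma}: the kernel $P$, and hence its Mukai vector $v(P)$, is defined over $\F$, so each graded piece $\cl(v^j(P))\in H^{2j}(X\times Y/\Ql)(j)$ is $\phi$-invariant. Consequently every $\Psi^{i,j}_P(l)$---being $p_1^*$, followed by cup product with a $\phi$-invariant class, followed by $p_{2*}$---commutes with $\phi$, and therefore so do the maps $\Psi^{\even}_P$ and $\Psi^{\odd}_P$ assembled from them.

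Combining this with Lemma \ref{lemma}, $\Psi^{\even}_P$ and $\Psi^{\odd}_P$ are isomorphisms of $\phi$-modules, so $\phi$ has equal characteristic polynomials on $\tH^{\even}(X/\Ql)$ and $\tH^{\even}(Y/\Ql)$, and likewise on the two odd structures. It then remains to disentangle the individual $P_i$ from these two pooled identities, which I expect to be the main obstacle: within each parity the summands $H^{2i}(i)$ (resp.\ $H^{2i-1}(i)$) all carry the same weight, so their Frobenius eigenvalues share a common absolute value and cannot be separated by weight alone.

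I would resolve the even case using that $X$ and $Y$ are connected surfaces: $H^0$ and $H^4$ are one-dimensional with $\phi$-eigenvalues $1$ and $q^2$, so after the defining twists each of $H^0(0)$ and $H^4(2)$ contributes the factor $(1-t)$ to the characteristic polynomial of $\phi$ on $\tH^{\even}$. These two factors agree for $X$ and $Y$, so cancelling them leaves the characteristic polynomials of $\phi$ on $H^2(X/\Ql)(1)$ and $H^2(Y/\Ql)(1)$ equal, and untwisting gives $P_2^X=P_2^Y$. For the odd case I would use Poincar\'e duality $H^3\cong(H^1)^{\vee}(-2)$ together with the weight-one symmetry: the eigenvalues of $\phi$ on $H^1$ have absolute value $q^{1/2}$ and $P_1$ has rational coefficients, so the eigenvalue multiset is stable under $\gamma\mapsto\overline{\gamma}=q/\gamma$. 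This makes $H^1(1)$ and $H^3(2)$ have the same characteristic polynomial, so the one on $\tH^{\odd}$ is its square. Since a polynomial with constant term $1$ is determined by its square, equality of the odd characteristic polynomials forces equality on $H^1(1)$, i.e.\ $P_1^X=P_1^Y$ after untwisting, and Poincar\'e duality then yields $P_3^X=P_3^Y$.

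With all five factors matched, $Z(X,t)=Z(Y,t)$, and in particular $\#X(\F)=\#Y(\F)$. I expect the disentangling of the odd part to be the genuinely new difficulty relative to the K3 case of \cite{LO}, where $H^1$ and $H^3$ vanish and only the even structure---essentially $H^2$---is at stake.
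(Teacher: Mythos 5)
Your proof is correct, and its skeleton is the paper's: pass to a Fourier--Mukai kernel, apply Lemma \ref{lemma} to the even and odd Mukai--Hodge structures, strip off the trivial $H^0$ and $H^4$ contributions, and then disentangle $H^1$ from $H^3$. The two places where you had to supply your own argument, however, diverge from the paper in ways worth comparing. (i) You work with characteristic polynomials of Frobenius; the paper works with traces, reducing the theorem to equality of $\Tr(\varphi^*|H^i)$ for $0\leq i\leq 4$ and computing, e.g., $\Tr(\varphi^*|\tH^{\even}(X/K))=2q^2+\Tr(\varphi^*|H^2(X/K)(-1))$. Equality of traces of the single operator $\varphi^*$ by itself only gives $\#X(\F)=\#Y(\F)$, so the paper's argument must implicitly be run for all powers $\varphi^n$ (which it supports, since every map in sight commutes with $\varphi$) to recover the full zeta function; your characteristic-polynomial bookkeeping sidesteps this point, since equality of all the $P_i$ immediately gives $Z(X,t)=Z(Y,t)$. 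You also made explicit something the paper uses silently: $\Psi^{\even}_P$ and $\Psi^{\odd}_P$ commute with Frobenius because $P$, and hence $\cl(v(P))$, is defined over $\F$; without this equivariance, Lemma \ref{lemma} would be a statement about vector spaces only, and this is exactly the arithmetic input the theorem needs. (ii) More substantively, your odd-degree argument replaces the paper's key Lemma \ref{claim}, which invokes the Hard Lefschetz theorem in \'etale cohomology (Deligne, Weil II) to conclude that the eigenvalues of $\varphi^*$ on $H^3$ are $q$ times those on $H^1$, whence the pooled odd trace splits. You reach the same eigenvalue relation instead from Poincar\'e duality $H^3\cong (H^1)^{\vee}(-2)$ combined with the weight-one Riemann hypothesis and the rationality of $P_1$ (which give stability of the $H^1$-eigenvalue multiset under $\gamma\mapsto q/\gamma$), and you then need the extra algebraic observation that a polynomial with constant term $1$ is determined by its square --- valid, since $f^2=g^2$ forces $f=\pm g$ in the integral domain $\Ql[t]$. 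The trade-off: the paper needs Weil II but no rationality or Riemann-hypothesis input, while you need both; yet for $H^1$ these are classical (Weil's theorems for abelian varieties, applied via the Albanese), so your route to the odd part is arguably the more elementary of the two. Both arguments are sound, and your assembly of the five factors into the zeta function is correct.
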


\begin{proof}
Let $\mathbb{F}=\mathbb{F}_q$ a field with $q$ elements,
and $P\in D^b(X\times Y)$  be the  kernel  of a Fourier--Mukai  equivalence $\Phi_P: D^b(X)\stackrel{\sim}{\longrightarrow} D^b(Y)$.

By the Lefschetz fixed-point formula for Weil cohomologies (see Proposition 1.3.6 and Section 4 of 
Kleiman \cite{kleiman}), to prove Theorem \ref{thmsurfaces} it is sufficient to show that, for some Weil cohomology $H$, the traces of the Frobenius map $\varphi$ acting on $H^i(X/K)$ and $H^i(Y/K)$ are the same for $0\leq i \leq 4$. Indeed, this condition is necessary as well: by the ``Riemann hypothesis'' portion of the Weil Conjectures, the characteristic polynomials of Frobenius acting on the $i^{\rm th}$ cohomology groups of smooth, projective varieties with equal zeta functions are equal.

Note that the traces of $\varphi$ acting on $H^i(X/K)$ and $H^i(Y/K)$ are trivially equal for $i=0,4$.

By Lemma \ref{lemma}, $\tH(X/K)^{\even}=\tH(Y/K)^{\even}$ and $\tH(X/K)^{\odd}=\tH(Y/K)^{\odd}$, so 
the trace of the Frobenius acting on both sides of each equation is the same.

Recall that 
\vspace*{-4pt}
\[\Tr(\varphi^*|H^i(X/K)(l))=\frac{1}{q^l}\Tr (\varphi^*|H^i(X/K)).
\vspace*{-3pt}
\]
In the case of the even Mukai--Hodge structure, then, 
\[\displaylines{
\Tr(\varphi^* | \tH(X/K)^{\even})\hfill\cr
\hfill=\Tr(\varphi^* | H^0(X/K)(-2))+\Tr(\varphi^* | H^2(X/K)(-1))+\Tr(\varphi^* | H^4(X/K))\cr
\hfill\rlap{$=2q^2 + \Tr(\varphi^* | H^2(X/K)(-1))$.}\phantom{=\Tr(\varphi^* | H^0(X/K)(-2))+\Tr(\varphi^* | H^2(X/K)(-1))+\Tr(\varphi^* | H^4(X/K))}
}
\]
Hence, $\Tr(\varphi^*| H^2(X/K))=\Tr(\varphi^*| H^2(Y/K))$.

The following lemma is sufficient to complete the proof of the theorem. Observe that until now, we have been working with an arbitrary Weil cohomology $H$. The lemma switches to \'etale cohomology since it uses Deligne's theory of weights in its proof.

\begin{lemma}\label{claim}
Let $V$ be a smooth, projective variety of dimension $d$ over the field $\mathbb{F}_q$ and $H$ be \'etale cohomology with $\Ql$-coefficients for $(\ell,q)=1$. If the eigenvalues (with multiplicity) of $\varphi^*$ acting on $H^i(V/\Ql)$, $0\leq i<\frac{d}{2}$, are $\{\a_1,\ldots,\a_n\}$, then the eigenvalues of $\varphi^*$ acting on $H^{2d-i}(V/\Ql)$ are $\{ q^{d-i}{\a_1},\ldots, q^{d-i}{\a_n} \}$.
\end{lemma}

By this lemma, working now in \'etale cohomology, $\Tr(\varphi^*|H^3(X/\mathbb{Q}_l))=q\Tr(\varphi^*|H^1(X/\Ql))$. Together with the equality of the odd Mukai--Hodge structures of $X$ and $Y$, this shows that $\Tr(\varphi^*|H^1(X/\Ql))=\Tr(\varphi^*|H^1(Y/\Ql))$ and $\Tr(\varphi^*|H^3(X/\Ql))=\Tr(\varphi^*|H^3(Y/\Ql))$, concluding the proof of the theorem.
\end{proof}

\begin{proof}[Proof of Lemma \ref{claim}]
The Hard Lefschetz Theorem for \'etale cohomology (Deligne \cite[Th\'eor\`eme 4.1.1]{weilii}) states that the map $L^{d-i}: H^i(V/\Ql)(i-d) \stackrel{\sim}{\longrightarrow} H^{2d-i}(V/\Ql)$ is an isomorphism, where $L^{d-i}$ is the $(d-i)^{\rm th}$ iteration of the Lefschetz operator $L$, which is given by intersecting with the hyperplane class. Since $L$ commutes with the action of the Frobenius map on cohomology, the Hard Lefschetz Theorem shows that if the eigenvalues of the action of $\varphi^*$ on $H^i(V/\Ql)$ are $\{\a_1,\ldots,\a_n\}$, then the eigenvalues of the action of $\varphi^*$ on $H^{2d-i}(V/\Ql)$ are $\{q^{d-i}\a_1,\ldots,q^{d-i}\a_n\}$.
\end{proof}

\subsection*{Acknowledgements}
Thanks to my advisor, Martin Olsson, for suggesting this research topic, and I'd like to thank both him and Arthur Ogus for many helpful conversations. I would also like to thank the referee for many helpful suggestions, including simplifications to the proofs of Theorem \ref{abelian.thm} and Lemma \ref{claim}.

\bibliographystyle{plain}
\bibliography{biblio}

\end{document}